\documentclass[12pt]{amsart}
\usepackage[utf8]{inputenc}
\usepackage{vmargin,amsmath,amsfonts,amssymb,amsthm,xcolor,amscd,latexsym,xspace,enumerate}
\usepackage[colorlinks=true,linkcolor=blue,citecolor=red]{hyperref}
\usepackage{hyperref}
\usepackage{dirtytalk}
\usepackage{leftindex}

\newtheorem{theorem}{Theorem}

\newtheorem{lemma}[theorem]{Lemma}

\newcommand{\supp}{\operatorname{supp}}

\newcommand{\Int}{\operatorname{int}}

\setlength{\parindent}{0pt}

\begin{document}

\title{On the Existence of an extremal function for the Delsarte extremal problem}
\author{Mita D. Ramabulana}

\address{Mita D. Ramabulana \endgraf
Department of Mathematics and Applied Mathematics \endgraf
University of Cape Town \endgraf
Private Bag X1, 7701, Rondebosch \endgraf
Cape Town, South Africa}
\email{rmbmit001@myuct.ac.za}
\keywords{LCA groups, positive definite functions, Delsarte extremal problem. \endgraf
\textit{Mathematics Subject Classification (2020):} 43A35}

\begin{abstract} 
    In the general setting of a locally compact Abelian group $G$, the Delsarte extremal problem asks for the supremum of integrals over the collection of continuous positive definite functions $f: G \to \mathbb{R}$ satisfying $f(0) = 1$ and having $\supp f_{+} \subset \Omega$ for some measurable subset $\Omega$ of finite measure. In this paper, we consider the question of the existence of an extremal function for the Delsarte extremal problem. In particular, we show that there exists an extremal function for the Delsarte problem when $\Omega$ is closed, extending previously known existence results to a larger class of functions.
\end{abstract}

\maketitle
  
\bibliographystyle{alphabetic}

  \section*{Introduction}

The Delsarte problem arose first in the context of discrete codes. Specifically, in \cite{delsarte1}, P. Delsarte found an upper bound for the number of code words in codes satisfying some prescribed distance condition. The solution was carried out in terms of Krawtchouk polynomials and the MacWilliams transform. Subsequently, in \cite{delsarte2}, P. ~ Delsarte, J.M. Goethals, and J.J. Seidel considered another instance of the Delsarte problem in the context of spherical codes in $\mathbb{R}^d$. In particular, they derived an upper bound for certain spherical codes in terms of coefficients of Gegenbauer expansions. Various instances of the Delsarte problem occur in the context of coding theory, see for instance \cite{shtrom} and the references within. It appears in the context of sphere packing where it is used to obtain upper bounds for the density of sphere packings, see \cite{arbab1, arbab2, gorbachev, gorbachev1, pbab, koblev, kolrev, kuk, viazovska}. For more on the Delsarte problem and its generalisation to locally compact Abelian (LCA) groups, see \cite{elena-szilard}. Based on the Poisson summation formula, in \cite{cohn-elkies}, H. Cohn and N. Elkies use the Delsarte scheme to show that the sphere packing density is bounded above by the Delsarte constant computed over a class of continuous positive definite functions $f: \mathbb{R}^d \to \mathbb{R}$ satisfying Delsarte-type problem restrictions. In \cite{viazovska}, M. Viazovska showed that the bound obtained by the Delsarte scheme in $\mathbb{R}^{8}$ is exact by constructing an extremal function for the Delsarte problem in that context. So, the problem of the existence of an extremal function for the Delsarte problem seems interesting. In this direction, in \cite{marcell-zsuzsa, elena-szilard} the problem of the existence of an extremal function for a Delsarte-type extremal problem is considered for the Gorbachev class, defined as follows. Let $G$ be a LCA group. Note that we assume LCA groups to be Hausdorff. Consider

	\begin{equation*}
		 \mathcal{G}_{G}(\Omega,Q) := \bigl\{ f \in P_{1}(G) \cap L^1(G): \supp{f_{+}} \subset \Omega, \supp{\widehat{f}} \subset Q \bigr\},
	\end{equation*}
	
	where $\Omega$ is a fixed closed subset of $G$ of finite Haar measure, $Q$ is a fixed compact subset of the dual group $\widehat{G}$ of $G$, $\widehat{f}$ is the Fourier transform of $f$, $P_{1}(G)$ is the collection of continuous positive definite functions $f: G \to ~ \mathbb{R}$ satisfying $f(0)=1$, $f_{+}$ is the positive part of $f$ given by $f_{+}(g) := \max{\{f(g), 0\}}$ for all $g \in G$, and $\supp f$ is the the support of $f$, given by $ \supp f := \overline{\{g \in G: f(g) \ne 0\}}$.

In \cite{marcell-zsuzsa}, it is shown that $\mathcal{G}_{G}(\Omega,Q)$ contains an extremal function for the Delsarte constant:
	
	\begin{equation*}
		\mathcal{D}_{G}(\Omega, Q) := \sup_{f \in \mathcal{G}_{G}(\Omega,Q)} \int_{G}f(g)\mbox{d}\lambda_{G}(g),
	\end{equation*}

where $\lambda_{G}$ is a fixed Haar measure on $G$. Note that this result was also proved in \cite{elena-szilard} for $G =\mathbb{R}^d$. In this paper, we remove the condition that $\supp \widehat{f} \subset Q$. More precisely, we consider the function class
	
	\begin{equation*}
		\mathcal{G}_{G}(\Omega) := \bigl\{ f \in P_1(G) \cap L^1(G): \supp{f_{+}} \subset \Omega \bigr\},
	\end{equation*}

	where $\Omega$ is a closed subset of $G$ of finite Haar measure, and we show the existence of an extremal function for the Delsarte constant 
	
	\begin{equation*}
		\mathcal{D}_{G}(\Omega): = \sup_{f \in \mathcal{G}_{G}(\Omega)} \int_{G}f(g)\mbox{d}\lambda_{G}(g).
	\end{equation*}

It may happen that the class $\mathcal{G}_{G}(\Omega)$ is empty. This happens when $0 \notin \Int \Omega$. This is because if $f$ is positive definite, then $|f(g)| \le f(0)$ for all $g \in G$, and if $0 \notin \Int \Omega$, then $f(0) = 0$, and hence $f$ is identically 0. In which case, the class $\mathcal{G}_{G}(\Omega)$ is empty because $f(0) =1$ can not hold for a continuous positive definite function $f$ with $f_{+}$ supported in $\Omega$. On the other hand, if $\Omega$ is a neighbourhood of $0$, i.e., $0 \in \Int \Omega$, then there is a symmetric neighbourhood $U$ of $0$ such that $U + U \subset \Omega$. Then the convolution $\mathbf{1}_{U} \ast \mathbf{1}_{U}$ of the characteristic function $\mathbf{1}_{U}$ of $U$ with itself is a continuous positive definite function supported in $\Omega$. We can normalise it to obtain a function that belongs to $\mathcal{G}_{G}(\Omega)$. In other words, the class $\mathcal{G}_{G}(\Omega)$ is nonempty if and only if $\Omega$ is a neighbourhood of $0$. \\

The reader may note that the condition that $f \in L^1(G)$ is needed in general for extremal problems in various function classes, such as those considered in \cite{elena-szilard, kolrev}. However, in this case it follows from $f \in P_{1}(G)$ together with $\lambda_{G} (\Omega) < \infty$. 
\section*{Notation and Preliminaries}
	
Let $G$ be a LCA group with identity $0$. Let $C(G)$ be the space of continuous functions on $G$ to the complex numbers and put on it the topology of uniform convergence on compact sets. The dual group $\widehat{G}$ of $G$ consists of continuous homomorphisms of $G$ to the multiplicative group $\mathbb{T} = \{z \in \mathbb{C}: |z| = 1 \}$. With respect to pointwise operations and the subspace topology that it inherits as a subset of $C(G)$, $\widehat{G}$ is a LCA group. Fix a Haar measure $\lambda_{G}$ on $G$ and let $L^1(G)$ denote the usual Banach space of integrable complex-valued functions on $G$. The Fourier transform of $f \in L^1(G)$ is defined by the formula:
	
	\begin{equation*}
		\widehat{f}(\chi) := \int_{G}f(g)\overline{\chi (g)}\mbox{d}\lambda_{G}(g) \mbox{ for all } \chi \in \widehat{G}.
	\end{equation*}

The convolution $f \ast g$ of functions $f,g: G \to \mathbb{C}$ is defined as

\begin{equation*}
    f \ast g (x) := \int_{G}f(y)g(x-y)\mbox{d}\lambda_{G}(y) \mbox{ for all } x \in G,
\end{equation*}

whenever the integral exists. A function $f: G \to \mathbb{C}$ is positive definite if the inequality
		\begin{equation*}
			\sum_{i = 1}^{n}\sum_{j=1}^{n}c_{i}\overline{c_{j}}f(g_{i}-g_{j}) \ge 0 
		\end{equation*}

		holds for all choices of $n \in \mathbb{N}$, $c_{i} \in \mathbb{C}$, and $g_{i} \in G$. It is a consequence of Bochner's theorem that the Fourier transform of an integrable continuous positive definite function is non-negative. Consequently, since $0 \le \widehat{f}(0) = \int_{G}f(g)\textup{d}\lambda_{G}(g)$, it follows that the integral of an integrable continuous positive definite function is non-negative. Let $P(G) \subset C(G)$ denote the collection of continuous positive definite functions on the group $G$. Define $P_{1}(G) := \{f \in P(G): f(0)=1 \}$. Related to the notion of a positive definite function is that of an integrally positive definite function. A function $f \in L^{\infty}(G)$ is integrally positive definite if the inequality

  \begin{equation*}
      \int_{G}\int_{G}f(y-x)h(x)\overline{h(y)}\textup{d}\lambda_{G}(x)\textup{d}\lambda_{G}(y) \ge 0
  \end{equation*}

  holds for all continuous compactly supported functions $h$ on $G$. According to \cite[Theorem 1.7.3]{sasvari} an integrally positive definite function agrees locally almost everywhere with a continuous positive definite function. When $G$ is $\sigma$-compact, locally almost everywhere is the same as almost everywhere. Therefore, on a $\sigma$-compact group an integrally positive definite function agrees almost everywhere with a continuous positive definite function. The approach to the main problem of this paper is to reduce the problem to the case of $\sigma$-compact LCA groups and then extend the solution to the general case. Recall that a LCA group $G$ is $\sigma$-compact if it can be written as a countable union of compact sets.

\section*{Existence of an Extremal Function}

In this section, we show the existence of an extremal function for the Delsarte extremal problem. Our approach follows that of \cite{marcell-zsuzsa}.
If $H$ is a subgroup of a LCA group $G$ and $\varphi: H \to \mathbb{R}$ is a function, its trivial extension is the function $\widetilde{\varphi}: G \to \mathbb{R}$, given by
			
			\begin{equation*}
				\widetilde{\varphi}(g) := \begin{cases}
					\varphi(g) & \text{if } g \in H, \\
					0  & \text{if } g \in G\symbol{92}H.
				\end{cases}
			\end{equation*}

\begin{lemma}[See \cite{marcell-zsuzsa}, Lemma 6 and proof of Theorem 2]  \label{extensionlemma}
    Let $H$ be an open subgroup of a LCA group $G$, and let $\Omega$ be a subset of $H$. If a function $\varphi : H \to \mathbb{R}$ is in $\mathcal{G}_{H}(\Omega)$, then its trivial extension $\widetilde{\varphi}: G \to \mathbb{R}$ is in $\mathcal{G}_{G}(\Omega)$.
\end{lemma}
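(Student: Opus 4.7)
The plan is to verify each of the five defining properties of $\mathcal{G}_G(\Omega)$ for the trivial extension $\widetilde{\varphi}$: (i) continuity on $G$, (ii) positive definiteness on $G$, (iii) the normalization $\widetilde{\varphi}(0)=1$, (iv) integrability with respect to $\lambda_G$, and (v) the support condition $\supp (\widetilde{\varphi})_{+}\subset\Omega$. The first step, which simplifies everything else, is to record the standard topological fact that an open subgroup of a topological group is automatically closed: the complement $G\setminus H$ is the union of the translates $g+H$ with $g\notin H$, each of which is open. Hence $G$ is the disjoint union of the open-and-closed cosets of $H$.

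With this in hand, (i) follows immediately because $\widetilde{\varphi}$ agrees with the continuous function $\varphi$ on the open set $H$ and with the zero function on each of the other open cosets. The normalization (iii) is trivial since $0\in H$ forces $\widetilde{\varphi}(0)=\varphi(0)=1$. For (iv) I would choose $\lambda_H$ to be the restriction of $\lambda_G$ to the Borel sets of $H$, which is a Haar measure on $H$; then $\int_G|\widetilde{\varphi}|\,d\lambda_G=\int_H|\varphi|\,d\lambda_H<\infty$. For (v), because $H$ is \emph{closed} in $G$, the closure in $G$ of any subset of $H$ coincides with its closure in $H$; hence $\supp_G(\widetilde{\varphi})_{+}=\supp_H\varphi_{+}\subset\Omega$.

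The only step that requires genuine bookkeeping is (ii), positive definiteness. Given $g_1,\dots,g_n\in G$ and $c_1,\dots,c_n\in\mathbb{C}$, partition the index set $\{1,\dots,n\}$ into classes $I_1,\dots,I_k$ according to which coset of $H$ contains $g_i$. If $i$ and $j$ lie in different classes, then $g_i-g_j\notin H$, so $\widetilde{\varphi}(g_i-g_j)=0$, and therefore
\begin{equation*}
\sum_{i,j=1}^{n}c_i\overline{c_j}\,\widetilde{\varphi}(g_i-g_j)=\sum_{\ell=1}^{k}\sum_{i,j\in I_\ell}c_i\overline{c_j}\,\widetilde{\varphi}(g_i-g_j).
\end{equation*}
Fixing a representative $r_\ell$ of the coset indexing $I_\ell$, each $g_i$ with $i\in I_\ell$ can be written as $g_i=r_\ell+h_i$ with $h_i\in H$, so that $g_i-g_j=h_i-h_j\in H$ and $\widetilde{\varphi}(g_i-g_j)=\varphi(h_i-h_j)$. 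Each inner sum is therefore $\ge 0$ by positive definiteness of $\varphi$ on $H$, and summing over $\ell$ gives the result.

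The main obstacle is genuinely small and lies in (ii), namely arranging the coset decomposition cleanly so that the double sum splits into nonnegative blocks; the remaining items are formal once one observes that open subgroups are automatically closed.
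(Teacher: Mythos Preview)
Your proof is correct and follows the same overall skeleton as the paper's: record that an open subgroup is closed, then verify continuity, the normalization, the support condition, and positive definiteness of the trivial extension. The one substantive difference is in step~(ii). The paper simply cites \cite[Theorem 32.43(a)]{hewittross} for the positive definiteness of $\widetilde{\varphi}$, whereas you supply a self-contained argument by partitioning the test points $g_1,\dots,g_n$ according to cosets of $H$, observing that cross-coset terms vanish, and invoking positive definiteness of $\varphi$ on each block. Your route is more elementary and keeps the lemma independent of an external reference; the paper's is shorter. You are also more explicit than the paper on two minor points: you spell out the $L^1$-membership of $\widetilde{\varphi}$ via $\lambda_H=\lambda_G|_H$, and you justify $\supp_G(\widetilde{\varphi})_{+}\subset\Omega$ by noting that closures in $G$ and in $H$ coincide for subsets of the closed set $H$; the paper handles both with a ``clearly''.
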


\begin{proof}
    Suppose that $\varphi: H \to \mathbb{R}$ is in $\mathcal{G}_{H}(\Omega)$. In particular, $\varphi$ is continuous, positive definite, has $\supp \varphi_{+} \subset \Omega$, and satisfies $\varphi(0) = 1$. Clearly, its trivial extension has $\supp \widetilde{\varphi}_{+} \subset \Omega$, and satisfies $\widetilde{\varphi}(0) = 1$. To see that $\widetilde{\varphi}$ is continuous, note that $H$, being an open subgroup, is also closed, and hence its complement $G \symbol {92} H$ is open. Now,  $\widetilde{\varphi}$ is continuous at all $g \in H$ because it coincides with the continuous function $\varphi: H \to \mathbb{R}$ on the open set $H$. It is continuous at all $g \in G \symbol{92} H$ because it is identically 0 on the open set $G \symbol{92} H$.  It is positive definite by  \cite[Theorem 32.43(a)]{hewittross}. 
\end{proof}

We shall also make use of the following lemma.

\begin{lemma}\label{sigmacompactgeneration}
    Let $G$ be a LCA group, and let $(K_n)_{n \in \mathbb{N}}$ be a sequence of $\sigma$-compact subsets of $G$. The group $H$ generated by the union $\bigcup_{n \in \mathbb{N}}K_n$ is $\sigma$-compact. Furthermore, if at least one of the $K_{n}$'s has non-empty interior, then $H$ is open.
\end{lemma}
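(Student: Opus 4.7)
The plan is to first reduce to a single $\sigma$-compact symmetric generating set and then exhibit $H$ as a countable union of $n$-fold Minkowski sums. Let $S := \bigcup_{n \in \mathbb{N}} K_n$; a countable union of $\sigma$-compact sets is $\sigma$-compact, so $S$ is $\sigma$-compact, and thus so is $S' := S \cup (-S) \cup \{0\}$. Writing $S' = \bigcup_{k} C_k$ with each $C_k$ compact, and letting $mS' := S' + S' + \cdots + S'$ ($m$ summands), the first main step is to observe that
\begin{equation*}
    mS' = \bigcup_{(k_1,\ldots,k_m) \in \mathbb{N}^m} \bigl(C_{k_1} + C_{k_2} + \cdots + C_{k_m}\bigr),
\end{equation*}
which is a countable union whose terms are images of compact products under the continuous addition map, hence compact. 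Therefore each $mS'$ is $\sigma$-compact.

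Since $S'$ is symmetric and contains $0$, the subgroup generated by $S$ satisfies $H = \bigcup_{m \ge 1} mS'$, which is a countable union of $\sigma$-compact sets and hence $\sigma$-compact. This disposes of the first claim.

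For the second claim, suppose some $K_{n_0}$ has non-empty interior, and pick a non-empty open set $U \subset K_{n_0} \subset H$. Choose any $u_0 \in U$; then $V := U - u_0$ is an open neighbourhood of $0$ contained in $H$ (because $U \subset H$ and $H$ is a subgroup). For an arbitrary $h \in H$, the translate $h + V$ is open in $G$ (translation is a homeomorphism in a topological group) and is contained in $H$ (again by the subgroup property). Thus every point of $H$ is interior, so $H$ is open.

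I do not anticipate a substantive obstacle here; the only point that merits a little care is ensuring that $S'$ is symmetric and contains $0$ before taking Minkowski powers, so that $\bigcup_m mS'$ really is the subgroup generated by $S$ rather than just the subsemigroup. Once that is set up, $\sigma$-compactness follows from the standard stability of $\sigma$-compactness under countable unions and continuous images (applied to the addition map), and the openness is the familiar fact that a subgroup of a topological group with non-empty interior is open.
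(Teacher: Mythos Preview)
Your proof is correct. The paper does not supply its own argument for this lemma; it simply declares the result standard and points to \cite[Proposition 1.2.1(c)]{deitmar}. Your proof is precisely the textbook argument one would find there: symmetrize the generating set, write the subgroup as the increasing union of Minkowski powers $mS'$, and use that continuous images of compacta are compact together with closure of $\sigma$-compactness under countable unions; the openness claim is the usual observation that a subgroup containing a non-empty open set is open. There is nothing to compare beyond noting that you have filled in what the paper leaves to a reference.
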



Lemma \ref{sigmacompactgeneration} is standard and the interested reader may consult the proof of \cite[Proposition 1.2.1 (c)]{deitmar}.

\begin{theorem}\label{reduction}
    Let $G$ be a LCA group, and $\Omega$ a subset of $G$ of finite Haar measure. There is an open $\sigma$-compact subgroup $H$ of $G$ containing $\Omega$ such that 

    \begin{equation*}\label{subgroupinequality}
        \mathcal{D}_{G}(\Omega) = \mathcal{D}_{H}(\Omega).
    \end{equation*}
\end{theorem}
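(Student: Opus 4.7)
My plan is to establish the equality $\mathcal{D}_G(\Omega) = \mathcal{D}_H(\Omega)$ by proving the two inequalities separately. The direction $\mathcal{D}_H(\Omega) \le \mathcal{D}_G(\Omega)$ is immediate from Lemma~\ref{extensionlemma}: any $\varphi \in \mathcal{G}_H(\Omega)$ has trivial extension $\widetilde{\varphi} \in \mathcal{G}_G(\Omega)$, and since $\widetilde{\varphi}$ vanishes off the open subgroup $H$ (on which $\lambda_G$ restricts to a Haar measure), the integrals agree.

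For the reverse inequality I would construct $H$ from a maximizing sequence. Fix $(f_n)_n \subset \mathcal{G}_G(\Omega)$ with $\int_G f_n\,d\lambda_G \to \mathcal{D}_G(\Omega)$. Each $f_n$ is bounded by $1$ and satisfies $\|f_n\|_1 \le 2\lambda_G(\Omega)$ as noted in the introduction, so every superlevel set $\{|f_n| \ge 1/k\}$ has finite Haar measure and, by inner regularity, is exhausted up to a null set by a countable family of compact sets $K_{n,k,j}$. Choose also compact sets $L_m$ exhausting $\Omega$ up to a Haar-null remainder, and a compact symmetric neighborhood $V$ of $0$. Let $H$ be the subgroup generated by $V \cup \bigcup_{n,k,j} K_{n,k,j} \cup \bigcup_m L_m$. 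By Lemma~\ref{sigmacompactgeneration}, $H$ is $\sigma$-compact and, because it contains the neighborhood $V$ of $0$, open.

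The key step is then to show that each $f_n$ vanishes identically on $G \setminus H$. Since the open subgroup $H$ is also closed, $G \setminus H$ is a disjoint union of open cosets; the set $\{f_n \ne 0\} \cap (G \setminus H)$ is open by continuity of $f_n$ and has Haar measure zero by construction, since $\{f_n \ne 0\} \setminus \bigcup_{k,j} K_{n,k,j}$ is null. Because Haar measure on an LCA group is strictly positive on every nonempty open set, this set must be empty, so $f_n \equiv 0$ on $G \setminus H$. It follows that $\int_G f_n\,d\lambda_G = \int_H f_n|_H\,d\lambda_G|_H$, and the restriction $f_n|_H$ is continuous positive definite on $H$ with $f_n|_H(0)=1$ and $\supp(f_n|_H)_+ \subset \Omega \cap H$, hence lies in $\mathcal{G}_H(\Omega \cap H)$. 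Passing to the limit yields $\mathcal{D}_H(\Omega \cap H) \ge \mathcal{D}_G(\Omega)$.

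The main obstacle I anticipate is the literal containment $\Omega \subset H$ demanded by the statement: the construction above only guarantees $\lambda_G(\Omega \setminus H) = 0$. I would close this gap by applying the same open-coset plus positivity-of-Haar-measure argument to the open set $\{f > 0\} \cap (G \setminus H) \subset \Omega \setminus H$, concluding that $\supp f_+ \subset H$ for every $f \in \mathcal{G}_G(\Omega)$, hence $\mathcal{D}_G(\Omega) = \mathcal{D}_G(\Omega \cap H)$; replacing $\Omega$ by $\Omega \cap H$ throughout then delivers the statement as written. Alternatively, if $\Omega$ is a priori $\sigma$-compact one may simply include $\Omega$ verbatim among the generators of $H$, and literal containment comes for free.
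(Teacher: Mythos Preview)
Your argument for the equality $\mathcal{D}_G(\Omega)=\mathcal{D}_H(\Omega)$ is sound, but it takes a noticeably more laborious route than the paper and, as you yourself flag, it does not deliver the literal containment $\Omega\subset H$ that the statement asks for. Your $H$ is generated from inner-regular compact exhaustions of the supports of a fixed maximizing sequence together with a null-set exhaustion of $\Omega$, so you only get $\lambda_G(\Omega\setminus H)=0$; you then repair the Delsarte constants via the open-set/positivity argument applied to $\{f>0\}\cap(G\setminus H)$. That repair is correct and yields $\mathcal{D}_G(\Omega)=\mathcal{D}_H(\Omega\cap H)=\mathcal{D}_H(\Omega)$, but the clause ``$H$ containing $\Omega$'' remains unproved in general.

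The paper's construction avoids all of this by going through \emph{outer} regularity instead: choose an open $U\supset\Omega$ with $\lambda_G(U)<\infty$, fix any open $\sigma$-compact subgroup $K$, and observe that each coset $g+K$ meets $U$ in an open set, which is either empty or of positive measure; finiteness of $\lambda_G(U)$ forces only countably many nonempty intersections, and $H$ is generated by $K$ together with those cosets. This gives $\Omega\subset U\subset H$ for free, with $H$ depending only on $\Omega$ and not on any maximizing sequence. A second simplification follows: once $\Omega\subset H$, one does not need $f_n\equiv 0$ on $G\setminus H$ but only $f_n\le 0$ there (immediate from $\supp(f_n)_+\subset\Omega\subset H$), so $\int_H f_n|_H\,d\lambda_H\ge\int_G f_n\,d\lambda_G$ directly. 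Your inner-regularity machinery and the vanishing argument for the $f_n$ are then unnecessary.
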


\begin{proof}
      By outer regularity of the Haar measure on $G$, there is an open identity neighbourhood $U$ in $G$ containing $\Omega$ with $\lambda_{G}(U) < \infty$. We claim that $U$ lies in an open $\sigma$-compact subgroup. To see this, let $K$ be some $\sigma$-compact open subgroup of $G$. One may always obtain such a $K$ by considering a relatively compact symmetric identity neighbourhood $V$  and observing that the union

    \begin{equation*}
        K:= \bigcup_{n = 1}^{\infty}nV
    \end{equation*}

    is an open $\sigma$-compact subgroup. Now, note that $G$ is a disjoint union of cosets $g+K$ of $K$ with $g$ running through a set of coset representatives of $K$. Since the cosets of $K$ are disjoint and cover $G$, the collection of subsets $(g+K) \cap U$ of $U$ as $g$ runs through the set of coset representatives of $K$ are disjoint and their union is $U$. Now, either $(g+K) \cap U = \emptyset$ or $\lambda_{G}((g+K) \cap U) >0$ because $(g+K) \cap U$ is open, and every open set is either empty or has strictly positive measure. Hence $(g+K) \cap U \ne \emptyset$ for at most countably many cosets $g+K$ of $K$, otherwise this would contradict the finiteness of the Haar measure of $U$. Clearly, $U$ is contained in the subgroup generated by the union of $K$ and the at most countably many open $\sigma$-compact cosets $g + K$ of $K$ that meet $U$ nontrivially. Denote this subgroup by $H$ and observe that it is open and $\sigma$-compact by Lemma \ref{sigmacompactgeneration}. Equip $H$ with Haar measure $\lambda_{H} := \lambda_{G}|_{H}$. By construction, $\Omega \subset H$. \\
    Let us first consider the case when $0 \notin \Int \Omega$. Then, both $\mathcal{G}_{G}(\Omega)$ and $\mathcal{G}_{H}(\Omega)$ are empty. Therefore, both sides of \eqref{subgroupinequality} are equal to 0, and the theorem is proved in this case. Now assume that $0 \in \Int \Omega$, so that both $\mathcal{G}_{G}(\Omega)$ and $\mathcal{G}_{H}(\Omega)$ are nonempty.\\
    First, let $(f_{n})_{n \in \mathbb{N}}$ be a $\mathcal{D}_{G}(\Omega)$-extremal sequence in $\mathcal{G}_{G}(\Omega)$ such that

    \begin{equation*}
        \int_{G}f_{n}(g)\mbox{d}\lambda_{G}(g) \ge \mathcal{D}_{G}(\Omega) - \frac{1}{n} \textup{ for all } n \in \mathbb{N}.
    \end{equation*}

Let $f_{n}|_{H}: H \to \mathbb{R}$ denote the restriction of $f_{n}$ to $H$. Observe that for all $n \in \mathbb{N}$, we have

\begin{equation}\label{splitintegral}
        \int_{G}f_{n}(g)\mbox{d}\lambda _{G}(g) = \int_{H}f_{n}(g)\mbox{d}\lambda_{G}(g) + \int_{G\symbol{92}H}f_{n}(g)\mbox{d}\lambda_{G}(g),
\end{equation}

and since $\supp (f_{n})_{+}$ is contained in $H$, we have that $f_{n}(g) \le 0$ for all $g \in G \symbol{92}H$. Therefore, 

\begin{equation}\label{negint}
    \int_{G\symbol{92}H}f_{n}(g)\mbox{d}\lambda_{G}(g) \le 0.
\end{equation}

From \eqref{splitintegral} and \eqref{negint} it follows that 

\begin{equation}\label{ineq}
    \int_{H}f_{n}|_{H}(g)\mbox{d}\lambda_{H}(g) = \int_{H}f_{n}(g)\mbox{d}\lambda_{G}(g) \ge \int_{G}f_{n}(g)\mbox{d}\lambda _{G}(g).
\end{equation}

Now, by the definition of $\mathcal{D}_{H}(\Omega)$ and inequality \eqref{ineq}, we have

\begin{equation*}
    \mathcal{D}_{H}(\Omega) \ge \int_{H}f_{n}|_{H}(g)\textup{d}\lambda_{H}(g) \ge \int_{G}f_{n}(g)\textup{d}\lambda_{G}(g) \ge \mathcal{D}_{G}(\Omega) - \frac{1}{n}.
\end{equation*}

Thus $\mathcal{D}_{H}(\Omega) \ge \mathcal{D}_{G}(\Omega)$. \\
Now, let $(h_{n})_{n \in \mathbb{N}}$ be a $\mathcal{D}_{H}(\Omega)$-extremal sequence in $\mathcal{G}_{H}(\Omega)$ such that 
			
			\begin{equation*}
				\int_{H}h_{n}(g)\mbox{d}\lambda_{H}(g) \ge \mathcal{D}_{H}(\Omega)-\frac{1}{n} \textup{ for all } n \in \mathbb{N}.
			\end{equation*}

			By Lemma \ref{extensionlemma}, the sequence $(\widetilde{h_{n}})_{n \in \mathbb{N}}$ of trivial extensions is in $\mathcal{G}_{G}(\Omega)$, and since $\widetilde{h_{n}}$ vanishes outside of $H$, we have that for all $n \in \mathbb{N}$,
			
			\begin{equation*}
				\mathcal{D}_{G}(\Omega) \ge \int_{G}\widetilde{h_{n}}(g)\textup{d}\lambda_{G}(g) = \int_{H}h_{n}(g)\textup{d}\lambda_{H}(g) \ge \mathcal{D}_{H}(\Omega) - \frac{1}{n}.
			\end{equation*}
			
			Thus, $\mathcal{D}_{G}(\Omega) \ge \mathcal{D}_{H}(\Omega)$, and hence 

   \begin{equation*}
       \mathcal{D}_{G}(\Omega) = \mathcal{D}_{H}(\Omega),
   \end{equation*}
   
   as required.

\end{proof}

Next, we show the existence of an extremal function for the Delsarte extremal problem on LCA groups. We first obtain a solution in the context of $\sigma$-compact groups and later extend it to the general case. The reader should note that the condition that $\Omega$ is closed kicks in here. We also give some information on how well the extremal function can be approximated, which needs some preliminary results.

\begin{lemma}[See \cite{sasvari}, Theorem 1.4.3]\label{integralinequality}
    Let $G$ be a LCA group and $f$ a continuous positive definite function on $G$. Let $V$ be a symmetric neighborhood of $0$ with compact closure and set $\chi:= \mathbf{1}_{V}/\lambda_{G}(V)$ where $\mathbf{1}_{V}$ is the characteristic function of $V$. Then the inequality

    \begin{equation*}\label{inequality}
        |f(x) - f \ast \chi (x)|^{2} \le 2f(0)\lambda_{G}(V)^{-1}\int_{V}(f(0)-\mathrm{Re}f(g)) \textup{d} \lambda_{G}(g)
    \end{equation*}
    holds for all $x \in G$.
\end{lemma}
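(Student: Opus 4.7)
My plan is to write $f(x) - f \ast \chi(x)$ as an average of translation-differences over $V$, apply the Cauchy--Schwarz inequality to pull the modulus squared inside the average, and then bound each pointwise difference by a standard positive-definiteness inequality.

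For the first step, since $\int_{G} \chi \, \textup{d}\lambda_{G} = 1$, I would rewrite
\begin{equation*}
f(x) - f \ast \chi(x) = \int_{G} (f(x) - f(x-y)) \chi(y) \, \textup{d}\lambda_{G}(y) = \frac{1}{\lambda_{G}(V)} \int_{V} (f(x) - f(x-y)) \, \textup{d}\lambda_{G}(y).
\end{equation*}
Taking absolute values squared and applying the Cauchy--Schwarz inequality on $V$ against $\mathbf{1}_{V}$ yields
\begin{equation*}
|f(x) - f \ast \chi(x)|^{2} \leq \frac{1}{\lambda_{G}(V)} \int_{V} |f(x) - f(x-y)|^{2} \, \textup{d}\lambda_{G}(y).
\end{equation*}

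For the second step, I would establish the pointwise inequality $|f(x) - f(x-y)|^{2} \leq 2 f(0)(f(0) - \mathrm{Re}\, f(y))$ for all $x, y \in G$. This follows from the fact that positive definiteness of $f$ makes $(a, b) \mapsto f(a - b)$ a positive semi-definite kernel, so the span of the $\delta_{g}$'s carries an inner product $\langle \delta_{a}, \delta_{b} \rangle := f(a - b)$. In this pre-Hilbert space one checks $\|\delta_{x} - \delta_{x-y}\|^{2} = 2 f(0) - 2\,\mathrm{Re}\, f(y)$, $\|\delta_{0}\|^{2} = f(0)$, and $\langle \delta_{x} - \delta_{x-y}, \delta_{0} \rangle = f(x) - f(x-y)$, so Cauchy--Schwarz supplies the bound. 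Substituting this pointwise bound into the estimate above produces exactly the right-hand side of the lemma.

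I do not foresee a serious obstacle: the proof amounts to two applications of Cauchy--Schwarz, one in $L^{2}(V)$ and one in the reproducing kernel pre-Hilbert space of $f$. The only delicate point is keeping track of signs in the pointwise bound, where the identity $f(-y) = \overline{f(y)}$ is what makes the real part enter via $f(y) + f(-y) = 2\,\mathrm{Re}\, f(y)$ when expanding $\|\delta_{x} - \delta_{x-y}\|^{2}$. The symmetry of $V$ is not actually used in this particular calculation; it would become relevant only for further properties of $\chi$ such as ensuring $f \ast \chi$ is itself positive definite.
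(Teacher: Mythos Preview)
Your proof is correct. The paper does not supply its own proof of this lemma; it is quoted with a reference to \cite{sasvari}, Theorem~1.4.3. Your two-step argument---writing $f(x)-f\ast\chi(x)$ as an average over $V$, applying Cauchy--Schwarz in $L^{2}(V)$, and then invoking the standard increment bound $|f(x)-f(x-y)|^{2}\le 2f(0)\bigl(f(0)-\mathrm{Re}\,f(y)\bigr)$ obtained from the reproducing-kernel Cauchy--Schwarz inequality---is exactly the classical proof of this estimate, so there is nothing to compare.
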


In the following, we adapt \cite[Lemma 1.5.1, Theorem 1.5.2]{sasvari} to our context, replacing weak$^{*}$ convergence against $L^1(G)$ and boundedness in $L^{\infty}(G)$ by weak convergence and boundedness in $L^2(G)$ to prove essentially the same results as in \cite[Lemma 1.5.1, Theorem 1.5.2]{sasvari}. The proofs are essentially the same, but we include them here for completeness.

\begin{lemma}[Compare \cite{sasvari}, Lemma 1.5.1]\label{weakuniform}
    Let $G$ be a LCA group. Let $k \in L^2(G)$, and let $(\varphi_{n})_{n \in \mathbb{N}}$ be a sequence in $L^2(G)$ with $\|\varphi_{n} \|_{L^{2}(G)} \le C$ for all $n \in \mathbb{N}$. If the sequence $(\varphi_{n})_{n \in \mathbb{N}}$ converges weakly in $L^2(G)$ to a function $\varphi \in L^2(G)$, then the sequence $(k \ast \varphi_{n})_{n \in \mathbb{N}}$  converges to $k \ast \varphi$ uniformly on the compact subsets of $G$. 
\end{lemma}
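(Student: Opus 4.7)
The plan is to run the standard equicontinuity-plus-pointwise-convergence argument, exactly transcribing Sasv\'ari's $L^{1}$/weak-$\ast$-$L^{\infty}$ proof into the self-dual Hilbert setting $L^{2}$/weak-$L^{2}$. First I would use inversion-invariance of Haar measure on an LCA group to rewrite
\begin{equation*}
(k \ast \varphi_{n})(x) = \int_{G} k(y)\varphi_{n}(x-y)\,\mbox{d}\lambda_{G}(y) = \int_{G} k(x-y)\varphi_{n}(y)\,\mbox{d}\lambda_{G}(y) = \bigl\langle \varphi_{n},\, \overline{k(x-\cdot)}\bigr\rangle_{L^{2}(G)},
\end{equation*}
after the change of variable $y \mapsto x-y$. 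For each fixed $x \in G$ the function $y \mapsto \overline{k(x-y)}$ lies in $L^{2}(G)$, so the weak convergence $\varphi_{n} \rightharpoonup \varphi$ immediately yields the pointwise convergence $(k \ast \varphi_{n})(x) \to (k \ast \varphi)(x)$ for every $x \in G$.

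Next I would establish equicontinuity of the family $\{k \ast \varphi_{n}\}_{n \in \mathbb{N}}$ on $G$. Cauchy--Schwarz together with $\|\varphi_{n}\|_{L^{2}(G)} \le C$ gives
\begin{equation*}
|(k \ast \varphi_{n})(x) - (k \ast \varphi_{n})(x')| \le C\,\bigl\|k(x-\cdot) - k(x'-\cdot)\bigr\|_{L^{2}(G)},
\end{equation*}
and the standard fact that translation is continuous on $L^{p}(G)$ for $1 \le p < \infty$ (applied with $p=2$) makes the right-hand side tend to $0$ as $x' \to x$, with a rate independent of $n$. The same estimate applies verbatim to $k \ast \varphi$, since $\|\varphi\|_{L^{2}(G)} \le C$ by weak lower semicontinuity of the norm.

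Finally, I would combine the two ingredients via the classical Arzel\`a--Ascoli trick. Given a compact set $K \subset G$ and $\varepsilon > 0$, equicontinuity produces finitely many points $x_{1}, \dots, x_{m} \in K$ and open neighbourhoods $U_{x_{j}}$ covering $K$ on which every function $k \ast \varphi_{n}$ and $k \ast \varphi$ oscillates by less than $\varepsilon/3$. Pointwise convergence at each center supplies an index $N$ with $|(k \ast \varphi_{n})(x_{j}) - (k \ast \varphi)(x_{j})| < \varepsilon/3$ for all $n \ge N$ and $j = 1, \dots, m$. The triangle inequality then gives $|(k \ast \varphi_{n})(x) - (k \ast \varphi)(x)| < \varepsilon$ for all $x \in K$ and $n \ge N$, which is the claimed uniform convergence.

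There is no substantial obstacle here; the argument is entirely routine. The only two bookkeeping points to watch are (i) invoking inversion-invariance of Haar measure so that the convolution reads as a genuine $L^{2}$-inner product, and (ii) checking that the modulus of continuity in the equicontinuity estimate depends only on the uniform bound $C$, not on $n$, so that the compactness argument goes through uniformly across the sequence.
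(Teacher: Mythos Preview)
Your proof is correct and follows essentially the same route as the paper: rewrite $(k\ast\varphi_n)(x)$ as the pairing of $\varphi_n$ against a translate of $k$, get pointwise convergence from weak convergence, and upgrade to uniform convergence on compacta via the uniform bound $C$ together with continuity of translation on $L^2(G)$. The only cosmetic difference is that the paper packages the equicontinuity step as equicontinuity of the functionals $L_n(h)=\int h(g)\varphi_n(-g)\,\mbox{d}\lambda_G(g)$ on $L^2(G)$ and then invokes compactness of $\{{}_x k : x\in K\}$ in $L^2(G)$, whereas you unwind this into a direct Arzel\`a--Ascoli argument on $G$; the mathematical content is the same.
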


\begin{proof}
    For $h \in L^2(G)$, define 

    \begin{equation*}
        L(h):= \int_{G}h(g)\varphi(-g)\mbox{d}\lambda_{G}(g), \hspace{0.1cm} L_{n}(h):= \int_{G}h(g)\varphi_{n}(-g)\mbox{d}\lambda_{G}(g).
    \end{equation*}

The functionals $L$ and $L_{n}$ are continuous with respect to the norm topology on $L^2(G)$. By H\"{o}lder's inequality, $|L_{n}(h)| \le C\|h\|_{L^2(G)}$ for all $n \in \mathbb{N}$. Therefore, by the Uniform Boundedness Principle, the family $\{L_{n}: n \in \mathbb{N} \}$ is equicontinuous on $L^2(G)$. Note that weak convergence of $(\varphi _n)_{n \in \mathbb{N}}$ to $\varphi$ is equivalent to pointwise convergence of $L_{n}$ to $L$, and that pointwise convergence together with equicontinuity of $(L_{n})_{n \in \mathbb{N}}$ implies that $(L_{n})_{n \in \mathbb{N}}$ converges to $L$ uniformly on the compact subsets of $L^2(G)$. Let $K$ be a compact subset of $G$. The mapping $G \ni x \mapsto \leftindex _x{k} \in L^2(G)$, where $\leftindex_x{k}$ denotes the left translate of $k$ by an element $x \in G$ and given by $_x k(g) = k(x+g)$ for all $g \in G$, is continuous and hence the set $\{\leftindex_x{k}: x \in K\}$ is compact in the norm topology of $L^2(G)$. The uniform convergence on $K$ of $(k \ast \varphi_{n})_{n \in \mathbb{N}}$ now follows from the equations

\begin{equation*}
    k \ast \varphi_{n}(x) = \int_{G}k(x+g)\varphi_{n}(-g)\mbox{d}\lambda_{G}(g) = L_{n}(\leftindex_{x}k) \mbox{ and } k \ast \varphi(x) = L(\leftindex_x{k}).
\end{equation*}
\end{proof}

\begin{theorem}[Compare \cite{sasvari}, Theorem 1.5.2]\label{uniformconvergence}
    Let $G$ be a LCA group. If a sequence $(f_{n})_{n \in \mathbb{N}}$ of continuous positive definite functions on $G$ converges weakly in $L^2(G)$ to a continuous positive definite function $f$ and $\lim_{n \to \infty} f_{n}(0) = f(0)$, then this sequence converges to $f$ uniformly on the compact subsets of $G$.
\end{theorem}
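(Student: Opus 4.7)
The plan is to follow the structure of the classical proof (Sasvári, Theorem 1.5.2), replacing its weak$^{\ast}$ arguments in $L^{\infty}$ by weak arguments in $L^{2}$ via Lemma \ref{weakuniform}. Fix a compact set $K \subset G$ and $\varepsilon > 0$; the goal is to produce $N$ with $\sup_{x \in K} |f_{n}(x) - f(x)| < \varepsilon$ for all $n \ge N$. For any symmetric neighborhood $V$ of $0$ with compact closure, set $\chi_{V} := \mathbf{1}_{V}/\lambda_{G}(V)$, and split
\begin{equation*}
|f_{n}(x) - f(x)| \le |f_{n}(x) - f_{n} \ast \chi_{V}(x)| + |f_{n} \ast \chi_{V}(x) - f \ast \chi_{V}(x)| + |f \ast \chi_{V}(x) - f(x)|.
\end{equation*}

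First I would fix the neighborhood $V$. Using continuity of $f$ at $0$, choose $V$ small enough that $\sup_{g \in V}|f(0) - \mathrm{Re}\, f(g)|$ is as small as needed; then Lemma \ref{integralinequality} applied to $f$ makes the third term above less than $\varepsilon/3$ uniformly in $x$.

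The middle term is handled by Lemma \ref{weakuniform}: since $V$ has finite Haar measure, $\chi_{V} \in L^{2}(G)$, and weak $L^{2}$-convergence of $(f_{n})$ forces $\|f_{n}\|_{L^{2}(G)}$ to be bounded by the Uniform Boundedness Principle, so Lemma \ref{weakuniform} (with $k = \chi_{V}$) gives $f_{n} \ast \chi_{V} \to f \ast \chi_{V}$ uniformly on $K$; for $n$ large this term is also below $\varepsilon/3$.

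The main obstacle is the first term, which must be controlled uniformly in $n$ with the same $V$ already fixed. Lemma \ref{integralinequality} applied to each $f_{n}$ yields
\begin{equation*}
|f_{n}(x) - f_{n} \ast \chi_{V}(x)|^{2} \le 2 f_{n}(0)\lambda_{G}(V)^{-1}\int_{V}\bigl(f_{n}(0) - \mathrm{Re}\, f_{n}(g)\bigr)\textup{d}\lambda_{G}(g).
\end{equation*}
The key observation is that $\mathbf{1}_{V} \in L^{2}(G)$, so weak $L^{2}$-convergence gives $\int_{V} f_{n} \to \int_{V} f$, and combined with $f_{n}(0) \to f(0)$ this shows that the right-hand side converges to $2 f(0)\lambda_{G}(V)^{-1}\int_{V}(f(0) - \mathrm{Re}\, f(g))\textup{d}\lambda_{G}(g)$, which the choice of $V$ has made small. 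Since $(f_{n}(0))$ is bounded, the bound on $|f_{n}(x) - f_{n}\ast\chi_{V}(x)|$ is eventually uniform in $x \in G$ and less than $\varepsilon/3$. Adding the three estimates completes the proof.
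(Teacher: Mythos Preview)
Your proposal is correct and follows essentially the same route as the paper's proof: the same three-term splitting, Lemma~\ref{integralinequality} for the two outer terms, and Lemma~\ref{weakuniform} for the middle term. You even spell out two details the paper leaves implicit---that weak $L^{2}$-convergence forces $\sup_{n}\|f_{n}\|_{L^{2}(G)} < \infty$ via the Uniform Boundedness Principle, and that $\mathbf{1}_{V} \in L^{2}(G)$ is what makes $\int_{V} f_{n} \to \int_{V} f$ follow from weak convergence.
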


\begin{proof}
    Let $K$ be a fixed compact subset of $G$. For any function $h$ on $G$, write $\|h\|_{K} := \sup\{|h(g)| : g \in K\}$ and $\|h\|_{\infty} :=\ \sup \{|h(g)|: g \in G\}$. We show that 

    \begin{equation*}
        \lim_{n \to \infty}\| f_{n} -f \|_K = 0.
    \end{equation*}

Let $V$ and $\chi$ be as in Lemma \ref{integralinequality}. We have

\begin{equation*}
    \|f_n - f\|_K \le \|f_n - f_n \ast \chi\|_{\infty} + \|f_n \ast \chi - f \ast \chi \|_K + \|f \ast \chi - f \|_{\infty}.
\end{equation*}

Now let $\varepsilon > 0$ be arbitrary. By the continuity of $f$, we can choose $V$ small enough so that

\begin{equation*}
    2f(0)\lambda_{G}(V)^{-1}\int_{V}(f(0) - \mathrm{Re}f(g))\mbox{d}\lambda_{G}(g) < \frac{\varepsilon ^2}{9}.
\end{equation*}

From Lemma \ref{integralinequality}, we have

\begin{equation*}
    \|f \ast \chi - f\|_{\infty} < \frac{\varepsilon}{3}.
\end{equation*}

Since $(f_{n})_{n \in \mathbb{N}}$ converges to $f$ weakly in $L^2(G)$ and $\lim _{n \to \infty}f_{n}(0) = f(0)$, there exists $N >0$ such that for all $n \in \mathbb{N}$ with $n > N$ we have

\begin{equation*}
    2f_{n}(0)\lambda_{G}(V)^{-1}\int_{V}(f_{n}(0)-\mathrm{Re}f_{n}(g))\mbox{d}\lambda_{G}(g) < \frac{\varepsilon ^2}{9}.
\end{equation*}

Applying Lemma \ref{integralinequality} again, we get 

\begin{equation*}
    \|f_n - f_n \ast \chi \|_{\infty} < \frac{\varepsilon}{3}.
\end{equation*}

By Lemma \ref{weakuniform}, there exists $M > N$ such that for all $n \in \mathbb{N}$ with $n > M$ we have

\begin{equation*}
    \|f_n \ast \chi - f \ast \chi \|_K < \frac{\varepsilon}{3}.
\end{equation*}

Altogether, we conclude that $\| f_n - f \|_{K} < \varepsilon$ for all $n > M$, showing that $(f_n)_{n \in \mathbb{N}}$ converges uniformly on the compact subset $K$.

\end{proof}

We shall a version of Mazur's Lemma. Note that for a subset $A$ of a Banach space $E$, $\mathrm{conv}(A)$ refers to the convex hull of $A$. 

\begin{lemma}[Mazur's lemma, see \cite{brezis}, Corollary 3.8 and Exercise 3.4] \label{mazur} Let $E$ be a Banach space and let $(x_{n})_{n \in \mathbb{N}}$ be a sequence in $E$ converging to $x \in E$ weakly, then there exists a sequence $(y_{n})_{n \in \mathbb{N}}$ in $E$ such that 

\begin{equation*}
    y_{n} \in \mathrm{conv}\left(\bigcup_{i=n}^{\infty}\{x_{i}\}\right) \mbox{ for all } n \in \mathbb{N},
\end{equation*}

and $(y_n)_{n \in \mathbb{N}}$ converges strongly to $x$.

\end{lemma}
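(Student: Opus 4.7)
The plan is to prove Mazur's lemma by the standard Hahn--Banach separation argument: if $x$ is a weak limit of $(x_n)$, then $x$ must lie in the norm closure of every ``tail'' convex hull $C_n := \mathrm{conv}\bigl(\bigcup_{i \ge n}\{x_i\}\bigr)$, and from this one can extract a sequence $y_n \in C_n$ converging to $x$ in norm by a simple diagonal choice.

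First I would fix $n \in \mathbb{N}$ and show $x \in \overline{C_n}$, the norm closure of $C_n$, arguing by contradiction. Since $\overline{C_n}$ is a closed convex subset of the Banach space $E$ and $\{x\}$ is a compact convex set disjoint from it (under the contradiction hypothesis), the geometric form of the Hahn--Banach theorem produces a continuous (real-)linear functional $\phi$ on $E$ and a real number $\alpha$ such that $\mathrm{Re}\,\phi(y) \le \alpha < \mathrm{Re}\,\phi(x)$ for every $y \in \overline{C_n}$. In particular $\mathrm{Re}\,\phi(x_i) \le \alpha$ for every $i \ge n$. But weak convergence $x_i \rightharpoonup x$ forces $\phi(x_i) \to \phi(x)$, hence $\mathrm{Re}\,\phi(x) \le \alpha$, contradicting the strict inequality. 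Therefore $x \in \overline{C_n}$.

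Once $x \in \overline{C_n}$ is established for every $n$, I would, for each $n$, select $y_n \in C_n$ with $\|y_n - x\|_E < 1/n$; such $y_n$ exists precisely because $x$ lies in the norm closure of $C_n$. By construction $y_n$ is a finite convex combination of elements $x_i$ with $i \ge n$, so $y_n \in \mathrm{conv}\bigl(\bigcup_{i \ge n}\{x_i\}\bigr)$, and $\|y_n - x\|_E \to 0$ gives the strong convergence.

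The main subtlety is making sure the separation argument is valid in the generality needed. If $E$ is a real Banach space this is immediate from, e.g., Brezis' Theorem~1.7 (separation of a point from a closed convex set). If $E$ is complex, one applies the separation theorem to $E$ viewed as a real Banach space to obtain a continuous real-linear functional, and then recovers a complex-linear functional $\phi$ via $\phi(v) = \phi_{\mathbb{R}}(v) - i\phi_{\mathbb{R}}(iv)$; the weak convergence still gives $\phi(x_i) \to \phi(x)$ and hence $\mathrm{Re}\,\phi(x_i) \to \mathrm{Re}\,\phi(x)$, which is what the argument uses. No other step presents a real obstacle, and indeed this is exactly the route taken in Brezis (Corollary~3.8 together with the exercise cited), so the proof reduces to assembling these ingredients in the order above.
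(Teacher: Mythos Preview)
Your proposal is correct and follows the standard Hahn--Banach separation argument. The paper does not supply its own proof of this lemma but merely cites Brezis, Corollary~3.8 and Exercise~3.4; your argument is precisely the route indicated by that reference, so there is nothing to contrast.
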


\begin{theorem}\label{sigmacompactcase}
    Let $G$ be a $\sigma$-compact LCA group and $\Omega$ a closed neighbourhood of $0$ with finite Haar measure. There exists an extremal function for $\mathcal{D}_{G}(\Omega)$. Moreover, the extremal function can be approximated uniformly on the compact subsets of $G$ by a $\mathcal{D}_{G}(\Omega)$-extremal sequence.
\end{theorem}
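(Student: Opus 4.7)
The strategy is to pass from a maximising sequence in $\mathcal{G}_G(\Omega)$ to a weak $L^2$-limit $f$, upgrade to strong $L^2$-convergence via Mazur's lemma, identify $f$ with a continuous positive definite function $\tilde f$ through the integrally-positive-definite theorem of \cite{sasvari} (which forces the $\sigma$-compactness hypothesis), and finally verify that $\tilde f$ lies in $\mathcal{G}_G(\Omega)$ and attains the supremum. Concretely, take $(f_n)\subset \mathcal{G}_G(\Omega)$ with $\int_G f_n\,\mathrm{d}\lambda_G\to \mathcal{D}_G(\Omega)$; since $f_n(0)=1$ forces $|f_n|\le 1$, $\supp (f_n)_+\subset \Omega$ forces $f_n\le 0$ on $G\setminus\Omega$, and $\int_G f_n\ge 0$, a short computation gives $\|f_n\|_{L^1(G)}\le 2\lambda_G(\Omega)$ and hence $\|f_n\|_{L^2(G)}^2\le 2\lambda_G(\Omega)$. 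By reflexivity of $L^2(G)$ I extract a weakly convergent subsequence $f_{n_k}\rightharpoonup f$, and Lemma \ref{mazur} then supplies convex combinations $y_k\in\mathrm{conv}\{f_{n_j}:j\ge k\}$ with $y_k\to f$ strongly in $L^2(G)$; convexity of $\mathcal{G}_G(\Omega)$ keeps $(y_k)$ admissible, linearity of the integral gives $\int y_k\to \mathcal{D}_G(\Omega)$, and after a further subsequence $y_k\to f$ almost everywhere.

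To produce a continuous positive definite representative of $f$, I observe that for each continuous compactly supported $h$ the function $H(z):=\int_G h(x)\overline{h(x+z)}\,\mathrm{d}\lambda_G(x)$ is continuous with compact support, hence lies in $L^2(G)$. The substitution $z=y-x$ converts the double-integral form of the positive-definiteness inequality for $y_k$ into $\int_G y_k H\,\mathrm{d}\lambda_G\ge 0$, and passing to the weak limit shows $f$ is integrally positive definite in the sense of the preliminaries. Because $G$ is $\sigma$-compact, \cite[Theorem 1.7.3]{sasvari} yields a continuous positive definite $\tilde f$ with $\tilde f=f$ almost everywhere.

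It remains to check $\tilde f\in\mathcal{G}_G(\Omega)$ and $\int_G\tilde f\,\mathrm{d}\lambda_G = \mathcal{D}_G(\Omega)$. The pointwise bound $|y_k|\le 1$ gives $|\tilde f|\le 1$ a.e., and since null sets have empty interior the continuity of $\tilde f$ upgrades this to $|\tilde f|\le 1$ everywhere; in particular $\tilde f(0)\le 1$. The same density argument, combined with $y_k\le 0$ on $G\setminus\Omega$ and the openness of $G\setminus\Omega$ (which is where closedness of $\Omega$ enters), upgrades the a.e.\ inequality $\tilde f\le 0$ on $G\setminus\Omega$ to a pointwise inequality, so $\supp\tilde f_+\subset\Omega$. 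For the integral I split $\int_G\tilde f = \int_\Omega\tilde f+\int_{G\setminus\Omega}\tilde f$: the first summand equals $\lim_k\int_\Omega y_k$ because $\mathbf{1}_\Omega\in L^2(G)$, while Fatou applied to the non-negative functions $-y_k$ on $G\setminus\Omega$ gives $\int_{G\setminus\Omega}\tilde f\ge \limsup_k\int_{G\setminus\Omega}y_k$; summing yields $\int_G\tilde f\ge \mathcal{D}_G(\Omega)$. Since $\mathcal{D}_G(\Omega)>0$ (because $\Omega$ is a neighbourhood of $0$) we must have $\tilde f(0)>0$, and if $\tilde f(0)<1$ then $\tilde f/\tilde f(0)\in\mathcal{G}_G(\Omega)$ would have integral exceeding $\mathcal{D}_G(\Omega)$; hence $\tilde f(0)=1$ and $\tilde f$ is extremal. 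Finally, the uniform convergence of $(y_k)$ to $\tilde f$ on compact sets follows from Theorem \ref{uniformconvergence}, since both are elements of $P_1(G)$ with value $1$ at the origin.

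I expect the most delicate step to be controlling the tail $\int_{G\setminus\Omega}\tilde f$: weak $L^2$-convergence only controls pairings against $L^2(G)$ test functions, and $\mathbf{1}_{G\setminus\Omega}$ need not belong to $L^2(G)$, so the tail is recovered only by combining Fatou with the sign condition $y_k\le 0$ on $G\setminus\Omega$, which itself depends on the closedness of $\Omega$ together with continuity of $\tilde f$. All other ingredients are essentially automatic consequences of the lemmas already assembled.
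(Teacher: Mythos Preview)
Your argument is correct and follows essentially the same route as the paper: extract a weak $L^2$-limit from a maximising sequence, upgrade to strong $L^2$ and a.e.\ convergence via Mazur's lemma, use \cite[Theorem 1.7.3]{sasvari} (with $\sigma$-compactness) to get a continuous positive definite representative, then verify $\supp\tilde f_{+}\subset\Omega$, $\int_G\tilde f\ge \mathcal{D}_G(\Omega)$, and $\tilde f(0)=1$ before invoking Theorem~\ref{uniformconvergence}. The only cosmetic difference is that you split $\int_G\tilde f$ as $\int_\Omega+\int_{G\setminus\Omega}$ and pair against $\mathbf 1_\Omega\in L^2$, whereas the paper splits into $\tilde f_{+}$ and $\tilde f_{-}$ and applies dominated convergence to $(y_k)_{+}\le \mathbf 1_\Omega$; both variants yield the same inequality.
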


\begin{proof}

Let $(f_{n})_{n \in \mathbb{N}}$ be a $\mathcal{D}_{G}(\Omega)$-extremal sequence in $\mathcal{G}_{G}(\Omega)$ chosen such that

\begin{equation*}
    \int_{G}f_{n}(g)\mbox{d}\lambda_{G}(g) \ge \mathcal{D}_{G}(\Omega) - \frac{1}{n} \mbox{ for all } n \in \mathbb{N}.
\end{equation*}

Since $\mathcal{G}_{G}(\Omega)$ is contained in a closed, bounded, and hence a weakly sequentially compact subset of $L^2(G)$, the sequence $(f_{n})_{n \in \mathbb{N}}$ has a subsequence converging weakly in $L^2(G)$ to some $f \in L^2(G)$. By replacing $(f_n)_{n \in \mathbb{N}}$ by this subsequence, we assume that $(f_n)_{n \in \mathbb{N}}$ converges weakly in $L^2(G)$ to $f$. Since $\mathcal{G}_{G}(\Omega)$ is a convex subset of $L^2(G)$, Lemma \ref{mazur} implies that there is a sequence $(h_{n})_{n \in \mathbb{N}}$ in $\mathcal{G}_{G}(\Omega)$ chosen such that $h_{n} \in \mathrm{conv} (\bigcup _{i =n}^{\infty}{f_{i}})$ for all $n \in \mathbb{N}$, and converging to $f$ strongly in $L^2(G)$. We claim that the latter sequence is $\mathcal{D}_{G}(\Omega)$-extremal. First note that by the extremality of $(f_{n})_{n \in \mathbb{N}}$, we have that for any $\varepsilon > 0$ there exists $N \in \mathbb{N}$ such that for all $n > N$ it holds that 

\begin{equation*}
    \left |\int_{G}f_{n}(g)\mbox{d}\lambda_{G}(g) - \mathcal{D}_{G}(\Omega) \right| < \varepsilon.
\end{equation*}

Now, write $h_{n} = \sum_{k =n}^{\infty}c_{k}^{n}f_{k}$ where $c_{k}^{n} \ge 0$, $\sum_{k = n}^{\infty}c_{k}^{n} = 1$, and $c_{k}^{n} = 0$ for all but finitely many $k \ge n$. Then for any $n > N$, we have 

\begin{equation*}
       \left | \int_{G}h_{n}(g)\mbox{d}\lambda_{G}(g)  - \mathcal{D}_{G}(\Omega) \right| \le \sum_{k =n}^{\infty}c_{k}^{n}\left|\int_{G}f_{k}(g)\mbox{d}\lambda_{G}(g) - \mathcal{D}_{G}(\Omega) \right| < \sum_{k = n}c_{k}^{n}\varepsilon = \varepsilon.
\end{equation*}

Therefore $(h_{n})_{n \in \mathbb{N}}$ is $\mathcal{D}_{G}(\Omega)$-extremal. So, replacing, if necessary, the sequence $(f_{n})_{n \in \mathbb{N}}$ by this new sequence $(h_{n})_{n \in \mathbb{N}}$, assume that $(f_{n})_{n \in \mathbb{N}}$ converges to $f$ strongly in $L^2(G)$. By passing to a pointwise almost everywhere convergent subsequence, assume that $(f_{n})_{n \in \mathbb{N}}$ converges to $f$ pointwise almost everywhere. The fact that $|f_{n}(g)| \le 1$ for all $n \in \mathbb{N}$ $g \in G$, and that $(f_{n})_{n \in \mathbb{N}}$ converges almost everywhere to $f$ implies that $f$ is bounded in $L^{\infty}(G)$ by 1. Since $(f_{n})_{n \in \mathbb{N}}$ converges weakly in $L^2(G)$ to $f$, it follows that $f$ is integrally positive definite. The function $f$ being integrally positive definite and $G$ being $\sigma$-compact implies that $f$ agrees almost everywhere with a continuous positive definite function. Thus, by correcting $f$ on a set of Haar measure zero, thus not changing the value of its integral, assume that $f$ is a continuous positive definite function. Continuity of $f$ and the fact that $f$ is a pointwise almost everywhere limit of the functions $f_{n}$, each of which satisfying $\supp (f_{n})_{+} \subset \Omega$, implies that $\supp f_{+} \subset \Omega$. Since $\supp f_{+} \subset \Omega$ we have 

       \begin{equation}\label{positivitybound}
        \int_{G}f_{+}(g)\textup{d}\lambda_{G}(g) = \int_{\Omega}f_{+}(g)\textup{d}\lambda_{G}(g) \le \lambda_{G}(\Omega) < \infty.
    \end{equation}
    On the other hand, by Fatou's lemma, 

    \begin{equation}\label{negativitybound}
        \begin{split}
            \int_{G}f_{-}(g)\textup{d}\lambda_{G}(g) & \le \liminf_{n \to \infty} \int_{G}(f_{n})_{-}(g)\textup{d}\lambda_{G}(g)\\ 
            &\le \liminf_{n \to \infty}\int_{G}(f_{n})_{+}(g)\textup{d}\lambda_{G}(g) \\
            & \le \lambda_{G}(\Omega) < \infty,    
        \end{split}
    \end{equation}

    where the second inequality of \eqref{negativitybound} is obtained using that $f_{n}$ being positive definite and integrable implies that
    \begin{equation*}
       0 \le \int_{G}f_{n}(g)\mbox{d}\lambda_{G}(g) = \int_{G}(f_{n})_{+}(g)\mbox{d}\lambda_{G}(g) - \int_{G}(f_{n})_{-}(g)\mbox{d}\lambda_{G}(g).
    \end{equation*} 

     Together, \eqref{positivitybound} and \eqref{negativitybound} imply that 
\begin{equation*}
      \int_{G}|f(g)|\textup{d}\lambda_{G}(g) =   \int_{G}f_{+}(g)\textup{d}\lambda_{G}(g) +  \int_{G}f_{-}(g)\textup{d}\lambda_{G}(g) < \infty.
\end{equation*}

Therefore, $f \in L^1(G)$. Next, we show that $f \in \mathcal{G}_{G}(\Omega)$ and that it is extremal. To that end, note that the sequence $((f_{n})_{+})_{n \in \mathbb{N}}$ of the positive parts of the $f_{n}$'s converges pointwise almost everywhere to $f_{+}$, and that $(f_{n})_{+} \le \mathbf{1}_{\Omega}$. Therefore, by Lebesgue's dominated convergence theorem and $\lambda_{G}(\Omega) < \infty$, we have

  \begin{equation}\label{eq10}
     \int_{G}f_{+}(g)\mbox{d}\lambda_{G}(g) =  \lim_{n \to \infty} \int_{G}(f_{n})_{+}(g)\mbox{d}\lambda_{G}(g).
  \end{equation}

  Recall that from  \eqref{negativitybound} we have the inequality

  \begin{equation}\label{eq9}
            \int_{G}f_{-}(g)\lambda_{G}(g) \le \liminf_{n \to \infty}\int_{G}(f_{n})_{-}(g)\mbox{d}\lambda_{G}(g).
 \end{equation}
  
  Now, subtracting \eqref{eq9} from \eqref{eq10} we have

  \begin{equation*}
\begin{split}
    \int_G f(g)  \text{d}\lambda_G(g) &\ge  \lim_{n \to \infty} \int_{G}(f_n)_{+}(g)\text{d}\lambda_G(g) - \liminf_{n \to \infty}{\int_{G}(f_n)_{-}(g)} \text{d}\lambda_G(g) \\
    &= \limsup_{n \to \infty}{\int_{G}\left((f_n)_{+}(g) - (f_n)_{-}(g)\right) \text{d}\lambda_G(g)} \\
    &= \lim_{n \to \infty}\int_{G}f_n (g) \mbox{d}\lambda _{G}(g) = \mathcal{D}_{G}(\Omega).
\end{split}
\end{equation*}
		
	

Since $f$ is continuous, positive definite, integrable, and $\supp f_{+} \subset \Omega$ have already been established, to have that $f \in \mathcal{G}_{G}(\Omega)$ we only need to show that $f(0)=1$. Since 

\begin{equation*}
    \int_{G}f(g)\mbox{d}\lambda_{G}(\Omega) \ge \mathcal{D}_{G}(\Omega) > 0,
\end{equation*}

$f$ is not the constantly zero function. Therefore, $f(0) \ne 0$. If $0 < f(0) <1$, then the function $h:= f/f(0)$ is in $\mathcal{G}_{G}(\Omega)$ and has

 \begin{equation*}
     \int_{G}h(g)\mbox{d}\lambda_{G}(g) = \frac{1}{f(0)}\int_{G}f(g)\mbox{d}\lambda_{G}(\Omega) \ge \frac{\mathcal{D}_{G}(\Omega)}{f(0)} > \mathcal{D}_{G}(\Omega),
 \end{equation*}

contradicting the extremality of $\mathcal{D}_{G}(\Omega)$. Therefore, $f(0) = 1$ and hence $f \in \mathcal{G}_{G}(\Omega)$. Note that this also shows that $f$ is extremal. Finally, since $(f_{n})_{n \in \mathbb{N}}$ converges to $f$ weakly in $L^2(G)$ and $\lim_{n \to \infty} f_{n}(0) = f(0)$, Theorem \ref{uniformconvergence} implies that $(f_{n})_{n \in \mathbb{N}}$ converges to $f$ uniformly on the compact subsets of $G$. 

\end{proof}

Now, having secured the existence of an extremal function for $\sigma$-compact groups, we extend the solution to general LCA groups to prove our main result.

\begin{theorem}\label{generalcase}
     Let $G$ be a LCA group and $\Omega$ a closed neighbourhood of $0$ with finite Haar measure. There exists an extremal function for $\mathcal{D}_{G}(\Omega)$. Moreover, the extremal function can be approximated uniformly on the compact subsets of $G$ by a $\mathcal{D}_{G}(\Omega)$-extremal sequence.
\end{theorem}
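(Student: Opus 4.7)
The plan is to reduce the problem to the $\sigma$-compact case via Theorem \ref{reduction}, solve it there with Theorem \ref{sigmacompactcase}, and then push the solution back to $G$ by trivial extension using Lemma \ref{extensionlemma}. First, I would invoke Theorem \ref{reduction} to obtain an open $\sigma$-compact subgroup $H$ of $G$ with $\Omega \subset H$ and $\mathcal{D}_{G}(\Omega) = \mathcal{D}_{H}(\Omega)$, equipping $H$ with the restricted Haar measure $\lambda_{H} := \lambda_{G}|_{H}$. I would then check the hypotheses of Theorem \ref{sigmacompactcase} on $H$: the set $\Omega$ is closed in $H$ (closedness is inherited by the subspace topology), is a neighbourhood of $0$ in $H$ (since $H$ is open in $G$), and satisfies $\lambda_{H}(\Omega) = \lambda_{G}(\Omega) < \infty$.

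Applying Theorem \ref{sigmacompactcase} yields an extremal function $\varphi \in \mathcal{G}_{H}(\Omega)$ for $\mathcal{D}_{H}(\Omega)$, together with a $\mathcal{D}_{H}(\Omega)$-extremal sequence $(\varphi_{n})_{n \in \mathbb{N}} \subset \mathcal{G}_{H}(\Omega)$ converging to $\varphi$ uniformly on the compact subsets of $H$. I would then form the trivial extensions $\widetilde{\varphi}$ and $\widetilde{\varphi_{n}}$ to $G$; by Lemma \ref{extensionlemma}, each of these lies in $\mathcal{G}_{G}(\Omega)$. Since the trivial extensions vanish outside $H$, their integrals over $G$ coincide with the integrals of the originals over $H$, giving
\begin{equation*}
    \int_{G}\widetilde{\varphi}(g)\,\mathrm{d}\lambda_{G}(g) = \int_{H}\varphi(g)\,\mathrm{d}\lambda_{H}(g) = \mathcal{D}_{H}(\Omega) = \mathcal{D}_{G}(\Omega),
\end{equation*}
so $\widetilde{\varphi}$ is extremal; the analogous identity for the $\widetilde{\varphi_{n}}$ shows that $(\widetilde{\varphi_{n}})_{n \in \mathbb{N}}$ is a $\mathcal{D}_{G}(\Omega)$-extremal sequence.

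It remains to upgrade uniform convergence on compact subsets of $H$ to uniform convergence on compact subsets of $G$. For this, I would use that $H$ is both open and closed in $G$: for any compact $K \subset G$, the set $K \cap H$ is compact in $H$, while $K \setminus H$ is also compact and disjoint from $H$. On $K \cap H$, the sequence $(\widetilde{\varphi_{n}})$ agrees with $(\varphi_{n})$ and converges uniformly to $\varphi = \widetilde{\varphi}$; on $K \setminus H$, both $\widetilde{\varphi_{n}}$ and $\widetilde{\varphi}$ vanish identically, so convergence is trivial. Together these yield uniform convergence of $(\widetilde{\varphi_{n}})$ to $\widetilde{\varphi}$ on $K$, completing the proof. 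There is no real obstacle here beyond bookkeeping, as the two nontrivial ingredients (the reduction and the $\sigma$-compact existence theorem) have already been established; the mildest care point is verifying that the topological and measure-theoretic hypotheses on $\Omega$ pass cleanly between $G$ and the clopen subgroup $H$.
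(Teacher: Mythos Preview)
Your proposal is correct and follows essentially the same route as the paper: reduce to an open $\sigma$-compact subgroup $H$ via Theorem~\ref{reduction}, apply Theorem~\ref{sigmacompactcase} there, and trivially extend both the extremal function and the approximating sequence back to $G$ using Lemma~\ref{extensionlemma}. Your write-up is in fact more careful than the paper's on two points---you explicitly verify that the hypotheses of Theorem~\ref{sigmacompactcase} transfer to $H$, and you spell out why uniform convergence on compacta of $H$ yields uniform convergence on compacta of $G$ by splitting $K$ into $K\cap H$ and $K\setminus H$---whereas the paper handles both with a bare ``and hence''.
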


\begin{proof}
    On account of Theorem \ref{reduction}, let $H$ be an open $\sigma$-compact subgroup of $G$ containing $\Omega$ such that $\mathcal{D}_{G}(\Omega) = \mathcal{D}_{H}(\Omega)$. By Theorem \ref{sigmacompactcase}, there exists an extremal function $f \in \mathcal{G}_{H}(\Omega)$ such that 

    \begin{equation*}
        \int_{H}f(g)\mbox{d}\lambda_{H}(g) = \mathcal{D}_{H}(\Omega).
    \end{equation*}

    Let $\widetilde{f}$ be the trivial extension of $f$. By Lemma \ref{extensionlemma}, $\widetilde{f} \in \mathcal{G}_{G}(\Omega)$. We claim that $\widetilde{f}$ is an extremal function for $\mathcal{D}_{G}(\Omega)$. Indeed, 
			
	\begin{equation*}
		\int_{G}\widetilde{f}(g)\textup{d}\lambda_{G}(g) = \int_{H}f(g)\textup{d}\lambda_{H}(g) =             \mathcal{D}_{H}(\Omega) = \mathcal{D}_{G}(\Omega).
	\end{equation*}

 As in Theorem \ref{sigmacompactcase}, $(f_n)_{n \in \mathbb{N}}$ converges to $f$ uniformly on the compact subsets of $H$, and hence $(\widetilde{f}_{n})_{n \in \mathbb{N}}$ converges to $\widetilde{f}$ uniformly on the compact subsets of $G$.

\end{proof}

\section*{Acknowledgements}

The author would like to sincerely thank Elena E. Berdysheva and Szil\'{a}rd Gy. ~ R\'{e}v\'{e}sz for their invaluable supervision and encouragement throughout this project. The author is also thankful to the anonymous reviewer for the feedback that helped improve the paper. Finally, the author would also like to thank the Shuttleworth Postgraduate Scholarship of the University of Cape Town for the financial support that made this research possible.

\end{document}